\begin{document}

\newtheorem{theorem}[subsection]{Theorem}
\newtheorem{proposition}[subsection]{Proposition}
\newtheorem{lemma}[subsection]{Lemma}
\newtheorem{corollary}[subsection]{Corollary}
\newtheorem{conjecture}[subsection]{Conjecture}
\newtheorem{prop}[subsection]{Proposition}
\newtheorem{defin}[subsection]{Definition}

\numberwithin{equation}{section}
\newcommand{\mr}{\ensuremath{\mathbb R}}
\newcommand{\mc}{\ensuremath{\mathbb C}}
\newcommand{\dif}{\mathrm{d}}
\newcommand{\intz}{\mathbb{Z}}
\newcommand{\ratq}{\mathbb{Q}}
\newcommand{\natn}{\mathbb{N}}
\newcommand{\comc}{\mathbb{C}}
\newcommand{\rear}{\mathbb{R}}
\newcommand{\prip}{\mathbb{P}}
\newcommand{\uph}{\mathbb{H}}
\newcommand{\fief}{\mathbb{F}}
\newcommand{\majorarc}{\mathfrak{M}}
\newcommand{\minorarc}{\mathfrak{m}}
\newcommand{\sings}{\mathfrak{S}}
\newcommand{\fA}{\ensuremath{\mathfrak A}}
\newcommand{\mn}{\ensuremath{\mathbb N}}
\newcommand{\mq}{\ensuremath{\mathbb Q}}
\newcommand{\half}{\tfrac{1}{2}}
\newcommand{\f}{f\times \chi}
\newcommand{\summ}{\mathop{{\sum}^{\star}}}
\newcommand{\chiq}{\chi \bmod q}
\newcommand{\chidb}{\chi \bmod db}
\newcommand{\chid}{\chi \bmod d}
\newcommand{\sym}{\text{sym}^2}
\newcommand{\hhalf}{\tfrac{1}{2}}
\newcommand{\sumstar}{\sideset{}{^*}\sum}
\newcommand{\sumprime}{\sideset{}{'}\sum}
\newcommand{\sumprimeprime}{\sideset{}{''}\sum}
\newcommand{\sumflat}{\sideset{}{^\flat}\sum}
\newcommand{\shortmod}{\ensuremath{\negthickspace \negthickspace \negthickspace \pmod}}
\newcommand{\V}{V\left(\frac{nm}{q^2}\right)}
\newcommand{\sumi}{\mathop{{\sum}^{\dagger}}}
\newcommand{\mz}{\ensuremath{\mathbb Z}}
\newcommand{\leg}[2]{\left(\frac{#1}{#2}\right)}
\newcommand{\muK}{\mu_{\omega}}
\newcommand{\thalf}{\tfrac12}
\newcommand{\lp}{\left(}
\newcommand{\rp}{\right)}
\newcommand{\Lam}{\Lambda_{[i]}}
\newcommand{\lam}{\lambda}
\newcommand{\af}{\mathfrak{a}}
\newcommand{\sw}{S_{[i]}(X,Y;\Phi,\Psi)}
\newcommand{\lz}{\left(}
\newcommand{\pz}{\right)}
\newcommand{\bfrac}[2]{\lz\frac{#1}{#2}\pz}
\newcommand{\odd}{\mathrm{\ primary}}
\newcommand{\even}{\text{ even}}
\newcommand{\res}{\mathrm{Res}}
\newcommand{\sumn}{\sumstar_{(c,1+i)=1}  w\left( \frac {N(c)}X \right)}
\newcommand{\lab}{\left|}
\newcommand{\rab}{\right|}
\newcommand{\Go}{\Gamma_{o}}
\newcommand{\Ge}{\Gamma_{e}}
\newcommand{\M}{\widehat}

\theoremstyle{plain}
\newtheorem{conj}{Conjecture}
\newtheorem{remark}[subsection]{Remark}

\makeatletter
\def\widebreve{\mathpalette\wide@breve}
\def\wide@breve#1#2{\sbox\z@{$#1#2$}%
     \mathop{\vbox{\m@th\ialign{##\crcr
\kern0.08em\brevefill#1{0.8\wd\z@}\crcr\noalign{\nointerlineskip}%
                    $\hss#1#2\hss$\crcr}}}\limits}
\def\brevefill#1#2{$\m@th\sbox\tw@{$#1($}%
  \hss\resizebox{#2}{\wd\tw@}{\rotatebox[origin=c]{90}{\upshape(}}\hss$}
\makeatletter

\title[Negative first moment of quadratic twists of $L$-functions]{Negative first moment of quadratic twists of $L$-functions}

\author[P. Gao]{Peng Gao}
\address{School of Mathematical Sciences, Beihang University, Beijing 100191, China}
\email{penggao@buaa.edu.cn}

\author[L. Zhao]{Liangyi Zhao}
\address{School of Mathematics and Statistics, University of New South Wales, Sydney NSW 2052, Australia}
\email{l.zhao@unsw.edu.au}

\begin{abstract}
 We evaluate asymptotically the negative first moment at points larger than $1/2$ of the family of quadratic twists of automorphic $L$-functions using multiple Dirichlet series  under the generalized Riemann hypothesis and the Ramanujan-Petersson conjecture.
\end{abstract}

\maketitle

\noindent {\bf Mathematics Subject Classification (2010)}: 11M06, 11M41, 11L05, 11F66  \newline

\noindent {\bf Keywords}:  negative moment, automorphic $L$-functions, multiple Dirichlet series, quadratic twists

\section{Introduction}\label{sec 1}

Moments of central values of $L$-functions have been studied extensively for their rather salient utility in tackling a lot of number theoretic problems.  We instance the investigation of non-vanishing of $L$-functions at these central values, a subject with deep arithmetic meanings.  Although results for positive moments abound, much less is known on negative moments, even conjecturally.   For the case of the Riemann zeta function $\zeta(s)$, a conjecture of S. M. Gonek \cite{Gonek89} predicts the order of magnitude of negative moments of $\zeta(s)$ near the critical line.  Further computations on the random matrix theory side due to M. V. Berry and J. P. Keating \cites{BK02}, as well as P. J. Forrester and J. P. Keating \cite{FK}, suggest extra transition regimes in Gonek's conjecture.  Note that sharp lower bounds for the negative moments of $\zeta(s)$ are also obtained in \cite{Gonek89} for certain ranges under the Riemann hypothesis (RH). \newline

 For the family of quadratic Dirichlet $L$–functions, almost sharp upper bounds are obtained in \cite{BFK21} for the negative moments in the function fields setting at points slightly shifted away from the central point. In \cite{Florea21}, asymptotic formulas are achieved for these negative moments for certain ranges of the shifts over function fields.  Additionally, lower bounds are given in \cite{Gao2022} for negative moments of families of quadratic Dirichlet $L$-functions at the central point  in
the number field setting, assuming the truth of a conjecture of S. Chowla \cite{chow} on the non-vanishing of these $L$-values. Note that the results in \cite{Gao2022} are only expected to be sharp for the $2k$-th moment with $-5/2 \leq k<0$ since the work in \cite{FK} also suggests certain phase changes in the asymptotic formulas for the $2k$-th moment of the family of quadratic Dirichlet $L$-functions when $2k = -(2j + 1/2)$ for any positive integer $j$.  We also point out here that asymptotic
formulas for negative moments of the above family of $L$-functions at $1$ were obtained by A. Granville and K. Soundararajan \cite{GS03} in the number field setting and by A. Lumley \cite{Lumley} for function fields. \newline

Our aim in this paper is to evaluate asymptotically the negative first moment of the family of quadratic twists of automorphic $L$-functions
at $1/2+\alpha$ with $0 < \alpha \leq 1/2$.  We remark here first that we need to assume the truth of the generalized Riemann hypothesis (GRH) so that the
$L$-functions under consideration do not vanish those points.  Also, since GRH alone does not rule out central vanishing of $L$-functions, it is appropriate to consider negative moments of these $L$-functions at points that are away from the central point. \newline

  To state our result, we recall that according to the Langlands program (see \cite{Langlands}), the most general $L$-function is that attached to an
automorphic representation of $\text{GL}_N$ over a number field, which in turn can be written as products of the $L$-functions attached to cuspidal automorphic representations of $\text{GL}_M$ over the $\mq$. \newline

  We fix a self-contragredient representation $\pi$ of $\text{GL}_{M}$ over $\mq$ so that $\pi = \tilde{\pi}$ and we write $L(s, \pi)$ for the $L$-function attached to $\pi$.  For sufficiently large $\Re (s)$, $L(s, \pi)$ can be expressed as an Euler product of the form
\begin{align*}
 L(s, \pi)=\prod_pL(s, \pi_p)=\prod_p\prod^M_{j=1}(1-\alpha_{\pi}(p, j)p^{-s})^{-1}.
\end{align*}
Here and throughout, we reserve the letter $p$ for a prime number and $\varepsilon$ for an arbitrarily small positive number that may not be the same at each occurrence.  Moreover, all implied constants may depend on $\varepsilon$.  \newline

We also assume the truth of the Ramanujan-Petersson conjecture, which implies that
\begin{equation} \label{Ramanujanconj}
   |\alpha_{\pi}(p, j)| \leq 1.
\end{equation}
The Rankin-Selberg symmetric square $L$-function $L(s, \pi \otimes \pi)$ factors as the product of the symmetric and exterior square $L$-functions
(see \cite[p. 139]{BG}):
\begin{align*}
 L(s, \pi \otimes \pi)=L(s, \vee^2)L(s, \wedge^2),
\end{align*}
   and has a simple pole at $s = 1$  which is carried by one of the two factors. We write the
order of the pole of $L(s, \wedge^2)$ as $(\delta(\pi) + 1)/2$  so that $\delta(\pi) = \pm 1$.
The order of the pole of $L(s, \vee^2)$ then equals to $(1-\delta(\pi) )/2$. We further choose $\pi$ so that $\delta(\pi)=-1$. \newline

  Let $\chi$ be any Dirichlet character and write $\chi_m$ for the quadratic character $\left(\frac {\cdot}{m} \right)$ for any odd,
positive integer $m$. We further denote $L(s, \pi \otimes \chi)$ for the $L$-function twisted by $\chi$. Note that when $\Re(s)>1$, $L(s, \pi
\otimes \chi)$ has an Euler product given by
\begin{align*}
 L(s, \pi \otimes \chi)=\prod_p\prod^M_{j=1}(1-\chi(p)\alpha_{\pi}(p, j)p^{-s})^{-1}.
\end{align*}

  We then obtain that for $\Re(s)>1$,
\begin{align}
\begin{split}
\label{Linverse}
 L^{-1}(s, \pi \otimes \chi)=\prod_p\prod^M_{j=1}(1-\alpha_{\pi}(p, j)\chi(p)p^{-s})=:\sum^{\infty}_{n=1}\frac {a_{\pi}(n)\chi(n)}{n^s}.
\end{split}
\end{align}

  Note that it follows from \eqref{asumestimation} below and partial summation that the series given in \eqref{Linverse} converges absolutely
for $\Re(s)>1/2+\varepsilon$ under GRH.  In particular, $L^{-1}(s, \pi \otimes \chi)$ can be meromorphically continued to the region $\Re(s)>1/2$. \newline

    For any $L$-function, $L^{(c)}$ (resp. $L_{(c)}$) denotes the function given by the Euler product defining $L$ but omitting
those primes dividing (resp. not dividing) $c$. We also write $L_p$ for $L_{(p)}$.   Our main result on the negative first moment of the quadratic twists of automorphic $L$-functions is as follows.
\begin{theorem}
\label{Theorem for all characters}
		Suppose that GRH and the Ramanujan-Petersson conjecture are true. Let $M \geq 1$ and let $\pi$ be a fixed self-contragredient representation
of $\text{GL}_{M}$ over $\mq$ such that $\delta(\pi)=-1$.  Let $w(t)$ be a non-negative Schwartz function compactly supported on $(0, \infty)$ and $\widehat w(s)$
be its Mellin transform.  For $0<\alpha \leq 1/2$, we have, for any $\varepsilon>0$,
\begin{align}
\label{Asymptotic for ratios of all characters}
\begin{split}	
&\sum_{\substack{(n,2)=1}}\frac{1}{L^{(2)}(\tfrac{1}{2}+\alpha, \pi \otimes \chi_{n})}w \Big( \frac nX \Big) =  X\M w(1)P(\tfrac 12+\alpha; \pi)
+O((1+|\alpha|)^{1+\varepsilon}X^{1-2\alpha+\varepsilon}),
\end{split}
\end{align}
 where
\begin{equation}
\label{Pz}
		P(z;\pi)=\frac12\prod_{p>2}\lz1+\lz 1-\frac 1p \pz \sum^{\infty}_{k=1}\frac {a_{\pi}(p^{2k})}{p^{2kz}}\pz .
\end{equation} 	
\end{theorem}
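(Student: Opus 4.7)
The plan is to use the Dirichlet series \eqref{Linverse} to expand $1/L^{(2)}(\tfrac12+\alpha,\pi\otimes\chi_n)$ and, after applying Mellin inversion to $w(n/X)$, study the resulting double Dirichlet series
\begin{equation*}
Z(s,w):=\sum_{(n,2)=1}\sum_{(m,2)=1}\frac{a_\pi(m)\chi_n(m)}{m^{w}n^{s}}.
\end{equation*}
This recasts the left-hand side of \eqref{Asymptotic for ratios of all characters} as
$\frac{1}{2\pi i}\int_{(c_0)} X^{s}\M w(s)\,Z(s,\tfrac12+\alpha)\,ds$ with $c_0$ chosen in the region of absolute convergence of $Z(\cdot,\tfrac12+\alpha)$.

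To evaluate the integral I would write $m=c^{2}d$ with $d$ odd and squarefree, so that $\chi_n(m)=\chi_n(d)$ whenever $(c,n)=1$, and separate the diagonal contribution $d=1$ from the off-diagonal $d>1$. On the diagonal the inner $n$-sum collapses to $\zeta^{(2c)}(s)$; shifting the $s$-contour past the simple pole at $s=1$ yields $X\M w(1)$ times an absolutely convergent Euler product, which a short re-assembly shows equals $P(\tfrac12+\alpha;\pi)$ from \eqref{Pz}, since each local factor $1+(1-1/p)\sum_{k\ge1}a_\pi(p^{2k})/p^{k(1+2\alpha)}$ arises naturally from the coprimality restriction $(c,n)=1$. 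For the off-diagonal, quadratic reciprocity writes the inner $n$-sum (up to $n\pmod{4}$ corrections and finitely many local factors) as a Dirichlet $L$-function $L(s,\chi^{*}_d)$ attached to a primitive quadratic character of conductor dividing $8d$.

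The central analytic difficulty is to bound the off-diagonal contribution by $O(X^{1-2\alpha+\varepsilon})$. Under GRH one has $L(s,\chi^{*}_d)\ll(d(1+|s|))^{\varepsilon}$ on $\Re(s)=\tfrac12+\varepsilon$, which combined with the Ramanujan--Petersson bound on $a_\pi$ controls each $d$-term by $X^{1/2+\varepsilon}d^{-1/2-\alpha+\varepsilon}$; however the sum $\sum_{d}d^{-1/2-\alpha+\varepsilon}$ diverges precisely when $\alpha\le\tfrac12$, so a naive termwise estimate is hopeless in the range of interest. The remedy is to treat $Z(s,w)$ as a genuine two-variable multiple Dirichlet series, obtain its meromorphic continuation past the line $\Re(s)=1$ through the reciprocity-induced functional equation of the ``multiple'' series, and then shift the $s$-contour beyond that line on the continuation. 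Exploiting the cancellation built into the analytically continued $Z(s,w)$, together with polynomial bounds in both the $d$- and $n$-aspects, converts the divergent sum over $d$ into a convergent integral whose GRH estimate yields exactly $X^{1-2\alpha+\varepsilon}$. Producing this analytic continuation of $Z(s,w)$ with the required polynomial growth, and carefully justifying the contour shifts in the extended region, is the main technical obstacle.
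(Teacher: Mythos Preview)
Your outline matches the paper's strategy: express the sum via Mellin inversion as an integral of the double Dirichlet series, isolate the diagonal $m=\square$ contribution (which carries the pole at $s=1$ and produces $P(\tfrac12+\alpha;\pi)$), and handle the off-diagonal by analytically continuing the two-variable series far enough to shift the contour to $\Re(s)=1-2\alpha+\varepsilon$. You also correctly diagnose that termwise GRH bounds on $L(s,\chi^*_d)$ are insufficient in the range $0<\alpha\le\tfrac12$. However, your proposal stops precisely at the crux: you state that ``producing this analytic continuation of $Z(s,w)$ with the required polynomial growth \dots\ is the main technical obstacle'' without indicating how to overcome it. As written this is a plan, not a proof.

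The paper supplies the missing mechanism, and it is not quite the ``reciprocity-induced functional equation of the multiple series'' you allude to. Rather, one applies to each individual $L(s,\chi^{(4k)})$ (with $k\neq\square$) the \v Cech functional equation valid for \emph{non-primitive} characters, which introduces the Gauss-sum series $K(1-s,\chi^{(4k)})=\sum_q \tau(\chi^{(4k)},q)q^{-(1-s)}$. After interchanging the $k$- and $q$-sums and writing $q=q_1q_2^2$ with $q_1$ squarefree, the inner sum over $k$ has an Euler product whose generic local factor is $1+a_\pi(p)\psi(p)\chi^{(q_1)}(p)p^{-(z-1/2)}+O(p^{-2s})$; this extracts a factor $L(z-\tfrac12,\pi\otimes\chi^{(q_1)}\psi)^{-1}$, bounded by $(|z|q_1)^\varepsilon$ under GRH for $\Re(z)>1$. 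The resulting region of holomorphy, combined with the original one via Bochner's tube theorem (convex hull), contains $\{\Re(z)>\tfrac12,\ \Re(s+z)>1,\ \Re(s+2z)>2\}$, which is exactly what is needed to reach $\Re(s)=1-2\alpha+\varepsilon$ at $z=\tfrac12+\alpha$. Polynomial bounds in vertical strips then follow from Propositions of Phragm\'en--Lindel\"of type for tube domains. These three ingredients --- the Gauss-sum functional equation for non-primitive characters, the identification of the automorphic $L^{-1}$ factor after swapping sums, and the convexity/tube-theorem step --- are what your proposal is missing.
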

	
The function $P(z;\pi)$ converges absolutely when $\Re(z) >1/2+\varepsilon$ in view of \eqref{abound} below. We also note that our proof of Theorem \ref{Theorem for all characters} in fact establishes an asymptotic formula for the left-hand side of \eqref{Asymptotic for ratios of all characters} for all $\alpha>0$. Our proof of Theorem \ref{Theorem for all characters} is motivated by the work of M. \v Cech \cite{Cech1} on $L$-functions ratios conjectures for the family of quadratic Dirichlet $L$-functions over $\mq$ using the method of multiple Dirichlet series. \newline

In general, one may apply the method of multiple Dirichlet series to study anything expressable in terms of multivariable integrals involving with a multiple Dirichlet series.  One then treats the multiple Dirichlet series as a function of several complex variables and the main task is to establish analytical properties of the multiple Dirichlet series and obtain meromorphic continuations of the series to a region as large as possible. Often this is achievable by developing enough functional equations.  One may then use techniques of contour integrals. \newline

  In \cite{Cech1}, M. \v Cech applied a Poisson-type summation formula to obtain the functional equation for a Dirichlet $L$-function, generalizing the known functional equation for Dirichlet $L$-functions attached to primitive characters.   This type of Poisson summation was initially developed by K. Soundararajan in \cite{sound1} to study the non-vanishing of the central values of quadratic Dirichlet $L$-functions.  Note that in \eqref{Asymptotic for ratios of all characters}, we consider the average of $(L^{(2)}(\tfrac{1}{2}+\alpha, \pi \otimes \chi_{n}))^{-1}$ over quadratic twists of automorphic functions by general Dirichlet characters instead of just primitive ones. This has the advantage that our proof of Theorem \ref{Theorem for all characters} may utilize the functional equations for general Dirichlet $L$-functions in \cite{Cech1} to achieve a better error term. \newline

Throughout the paper, we use ``$\pi$" for both a fixed self-contragredient representation of $\text{GL}_{M}$ over $\mq$ and the transcendental number $3.1415 \cdots$, as they are both standard notations.  This will nevertheless cause no ambiguity as the symbol's meaning will be clear from the context.

\section{Preliminaries} \label{sec 2}

In this section, we include some auxiliary results needed in the paper.

\subsection{Quadratic Gauss sums}
\label{sec2.4}
    Following the notation given in \cite[Chapter 3]{iwakow}, we denote by $\chi^{(m)}$ the Kronecker symbol
$\left(\frac {m}{\cdot} \right)$ for any integer $m \equiv 0, 1 \pmod 4$.
We further denote $\psi_j, j \in \{ \pm 1, \pm 2\}$ be quadratic characters given
by $\psi_j=\chi^{(4j)}$. Note that $\psi_j$ is a primitive character modulo $4j$ for each $j>1$.
We also denote $\psi_0$ the primitive principal character. \newline

  For any integer $q$ and any Dirichlet character $\chi$ modulo $n$, we define the associated Gauss sum $\tau(\chi,q)$ by
\begin{equation*}
		\tau(\chi,q)=\sum_{j\bmod n}\chi(j)e \left( \frac {jq}n \right), \quad \mbox{where} \quad e(z) = \exp (2 \pi i z).
\end{equation*}
We quote the following result from \cite[Lemma 2.2]{Cech1}.
\begin{lemma}
\label{Lemma changing Gauss sums}
\begin{enumerate}
\item If $l\equiv1 \pmod 4$, then
\begin{equation*}
		\tau (\chi^{(4l)},q )=
\begin{cases}
					0,&\hbox{if $(q,2)=1$,}\\
					-2\tau\lz\chi_l,q\pz,&\hbox{if $q\equiv2 \pmod 4$,}\\
					2\tau\lz \chi_l ,q\pz,&\hbox{if $q\equiv0 \pmod 4$.}
\end{cases}
\end{equation*}
			\item If $l \equiv3 \pmod 4$, then
\begin{equation*}
				\tau(\chi^{(4l)},q)=\begin{cases}
					0,&\hbox{if $2|q$,}\\
					-2i\tau\lz\chi_l,q\pz,&\hbox{if $q\equiv1 \pmod 4$,}\\
					2i\tau\lz\chi_l,q\pz,&\hbox{if $q\equiv3 \pmod 4$.}
				\end{cases}
\end{equation*}
		\end{enumerate}
\end{lemma}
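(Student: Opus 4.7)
\textbf{Proof plan for Lemma \ref{Lemma changing Gauss sums}.}

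The strategy is to reduce the Kronecker character $\chi^{(4l)}$ to the Jacobi character $\chi_l$ on odd integers via quadratic reciprocity, then decompose the Gauss sum modulo $4l$ through a Chinese-remainder-type splitting. Since $\chi^{(4l)}$ vanishes on even integers, only odd residues contribute, and these are naturally parameterized as $j = 4m + s$ with $s \in \{1, 3\}$ and $m \in \{0, 1, \ldots, l-1\}$. I expect $\tau(\chi^{(4l)}, q)$ to factor as $\tau(\chi_l, q)$ multiplied by an elementary factor depending only on $q \pmod 4$, from which every case of the lemma follows by direct evaluation.

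For the character conversion, for odd $j$ I would use $\chi^{(4l)}(j) = \left(\frac{4l}{j}\right) = \left(\frac{l}{j}\right)$, since $\left(\frac{4}{j}\right) = 1$; quadratic reciprocity then yields $\left(\frac{l}{j}\right) = \chi_l(j)$ when $l \equiv 1 \pmod 4$ and $\left(\frac{l}{j}\right) = (-1)^{(j-1)/2}\chi_l(j)$ when $l \equiv 3 \pmod 4$. Writing $j = 4m + s$ and factoring $e(jq/(4l)) = e(sq/(4l))\, e(mq/l)$ yields
\[
\tau(\chi^{(4l)}, q) = \sum_{s \in \{1,3\}} \varepsilon_l(s)\, e\!\left(\frac{sq}{4l}\right) \sum_{m=0}^{l-1} \chi_l(4m+s)\, e\!\left(\frac{mq}{l}\right),
\]
where $\varepsilon_l(s) = 1$ throughout when $l \equiv 1 \pmod 4$, and $\varepsilon_l(s) = (-1)^{(s-1)/2}$ when $l \equiv 3 \pmod 4$ (the sign depends only on $s$ because $j$ differs from $s$ by the even integer $4m$).

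The inner sum I would evaluate via the substitution $n \equiv 4m + s \pmod l$, which is legitimate since $\gcd(4, l) = 1$. Using $m \equiv \overline{4}(n-s) \pmod l$ together with the scaling identity $\tau(\chi_l, aq) = \chi_l(a)\,\tau(\chi_l, q)$ for $\gcd(a,l) = 1$ (and $\chi_l(4) = 1$), the inner sum collapses to $e(-\overline{4}sq/l)\,\tau(\chi_l, q)$. Writing $4\overline{4} = 1 + lt$ and reducing modulo $4$ shows $t \equiv 3 \pmod 4$ when $l \equiv 1 \pmod 4$ and $t \equiv 1 \pmod 4$ when $l \equiv 3 \pmod 4$; substituting gives $\tau(\chi^{(4l)}, q) = \tau(\chi_l, q)\,C_l(q)$ with $C_l(q) = e(q/4) + e(3q/4)$ or $C_l(q) = e(-q/4) - e(-3q/4)$ in the two cases respectively. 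Reading off $C_l(q)$ for each residue of $q$ modulo $4$ produces the stated formulas. The delicate part is keeping the two sign inputs aligned (the quadratic-reciprocity twist $\varepsilon_l(s)$ and the parity of $t$); once these are tracked correctly, the evaluation of $C_l(q)$ modulo $4$ is mechanical.
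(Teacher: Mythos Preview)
Your argument is correct. The parameterization $j=4m+s$ of odd residues modulo $4l$, the change of variable $n\equiv 4m+s\pmod l$ in the inner sum, and the bookkeeping of $t\pmod 4$ via $4\overline 4=1+lt$ all check out; evaluating $C_l(q)$ on each residue class of $q\pmod 4$ reproduces exactly the six cases of the lemma.

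As for comparison: the paper does not give any proof of this lemma. It is simply quoted from \cite[Lemma~2.2]{Cech1} without argument. So your write-up is not an alternative approach but in fact the only proof on offer here, and it is a clean, self-contained one. One minor presentational point: you might want to state at the outset that $l$ is odd (which is implicit in $l\equiv 1,3\pmod 4$) so that $\gcd(4,l)=1$, the Jacobi-reciprocity step, and the substitution $m\equiv\overline 4(n-s)\pmod l$ are visibly justified.
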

	
	For any odd, positive integer $n$, we further define $G\lz\chi_n,q\pz$ by
\begin{align}
\label{key}
\begin{split}
			G\lz\chi_n,q\pz&=\lz\frac{1-i}{2}+\leg{-1}{n}\frac{1+i}{2}\pz\tau\lz\chi_n,q\pz=\begin{cases}
				\tau\lz\chi_n,q\pz,&\hbox{if $n\equiv1 \pmod 4$,}\\
				-i\tau\lz\chi_n,q\pz,&\hbox{if $n\equiv3\pmod 4$}.
			\end{cases}
\end{split}
\end{align}
	
Let $\varphi(m)$ denote the Euler totient function of $m$.  Our next result is taken from \cite[Lemma 2.3]{sound1} that evaluates $G\lz\chi_m,q\pz$.
\begin{lemma}
\label{lem:Gauss}
   If $(m,n)=1$ then $G(\chi_{mn},q)=G(\chi_m,q)G(\chi_n,q)$. Suppose that $p^a$ is
   the largest power of $p$ dividing $q$ (put $a=\infty$ if $q=0$).
   Then for $k \geq 0$ we have
\begin{equation*}
		G\lz\chi_{p^k},q\pz=\begin{cases}\varphi(p^k),&\hbox{if $k\leq a$, $k$ even,}\\
			0,&\hbox{if $k\leq a$, $k$ odd,}\\
			-p^a,&\hbox{if $k=a+1$, $k$ even,}\\
			\leg{qp^{-a}}{p}p^{a}\sqrt p,&\hbox{if $k=a+1$, $k$ odd,}\\
			0,&\hbox{if $k\geq a+2$}.
		\end{cases}
\end{equation*}
\end{lemma}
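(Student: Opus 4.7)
My plan is to verify the two parts of the lemma separately: the prime-power formula by a direct calculation split by the parity of $k$, and the multiplicativity by Chinese Remainder Theorem combined with quadratic reciprocity.

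For the prime-power evaluation, I would unfold $G(\chi_{p^k},q)=c(p^k)\tau(\chi_{p^k},q)$, where $c(p^k)=\frac{1-i}{2}+\leg{-1}{p^k}\frac{1+i}{2}$. When $k$ is even, $\chi_{p^k}$ reduces to the principal character modulo $p$ and $c(p^k)=1$; inclusion-exclusion rewrites the Gauss sum as $\sum_{j\bmod p^k}e(jq/p^k)-\sum_{j\bmod p^{k-1}}e(jq/p^{k-1})$, which evaluates to $\varphi(p^k)$, $-p^a$, or $0$ according to whether $k\le a$, $k=a+1$, or $k\ge a+2$. When $k$ is odd, $\chi_{p^k}=\chi_p$, and I would split $j=j_0+pj_1$ with $j_0\bmod p$ and $j_1\bmod p^{k-1}$ to factor the Gauss sum as $\left(\sum_{j_0\bmod p}\leg{j_0}{p}e(j_0q/p^k)\right)\left(\sum_{j_1\bmod p^{k-1}}e(j_1q/p^{k-1})\right)$. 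The $j_1$-factor kills the range $k\ge a+2$, and for $k\le a$ the $j_0$-sum collapses to $\sum_{j_0\bmod p}\leg{j_0}{p}=0$. The delicate case is $k=a+1$: writing $q=p^aq'$ with $(q',p)=1$, the $j_0$-sum becomes $\leg{q'}{p}g_p$, where $g_p=\sum_{j\bmod p}\leg{j}{p}e(j/p)$ is the classical quadratic Gauss sum. Invoking Gauss's evaluation ($g_p=\sqrt p$ if $p\equiv1\pmod 4$, $g_p=i\sqrt p$ if $p\equiv3\pmod 4$) and matching against the values $c(p^k)=1$ or $c(p^k)=-i$ in the two sub-cases, both residue classes fuse into the uniform expression $\leg{qp^{-a}}{p}p^a\sqrt p$. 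This is precisely the reason for the twist $c(n)$ in \eqref{key}: it absorbs the imaginary unit appearing for $p\equiv3\pmod 4$ and makes $G$ real.

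For multiplicativity, I would use the Chinese Remainder Theorem to write $j\equiv j_1 n\bar n+j_2 m\bar m\pmod{mn}$ with $\bar n n\equiv 1\pmod m$ and $\bar m m\equiv 1\pmod n$, yielding $\tau(\chi_{mn},q)=\tau(\chi_m,q\bar n)\tau(\chi_n,q\bar m)$. When $(q,mn)=1$, applying $\tau(\chi_m,a)=\chi_m(a)\tau(\chi_m,1)$ (and using that $\chi_m$ is real to replace $\chi_m(\bar n)$ by $\chi_m(n)$) converts the desired identity $G(\chi_{mn},q)=G(\chi_m,q)G(\chi_n,q)$ into the algebraic relation $c(mn)\chi_m(n)\chi_n(m)=c(m)c(n)$, which a four-case check on $(m,n)\bmod 4$ shows to be precisely the statement of quadratic reciprocity. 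The remaining cases $\gcd(q,mn)>1$ can be handled by applying the same local decomposition at each prime dividing $(q,mn)$ or simply checked against the explicit prime-power formula already established. The main obstacle throughout is the $k=a+1$ odd sub-case: one must invoke Gauss's classical sum evaluation and verify that the twist $c(p^k)$ pairs with it in just the right way to produce the clean uniform formula, and it is this interaction that couples the whole lemma to quadratic reciprocity.
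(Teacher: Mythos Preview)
The paper does not actually prove this lemma: it is quoted as \cite[Lemma 2.3]{sound1}, with no argument supplied. Your proposal, by contrast, supplies a direct proof from first principles, and it is essentially correct.

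Your prime-power computation is accurate in every case, including the delicate $k=a+1$ odd case where Gauss's evaluation of $g_p$ interacts with the normalizing factor $c(p^k)$ to produce the uniform real answer. For the multiplicativity, note that your CRT decomposition in fact yields $\tau(\chi_{mn},q)=\chi_m(n)\chi_n(m)\,\tau(\chi_m,q)\,\tau(\chi_n,q)$ for \emph{all} $q$, not only for $(q,mn)=1$: after reaching $\tau(\chi_m,q\bar n)$, the change of variable $j_1\mapsto n j_1$ (a bijection modulo $m$ since $(n,m)=1$) gives $\tau(\chi_m,q\bar n)=\chi_m(n)\,\tau(\chi_m,q)$ with no hypothesis on $q$ whatsoever. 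Hence the case split you introduce at the end is unnecessary, and the somewhat vague appeal to ``checking against the prime-power formula'' for $(q,mn)>1$ can simply be dropped. With that simplification your argument is complete, and the identity $c(mn)\chi_m(n)\chi_n(m)=c(m)c(n)$ is, as you observe, exactly quadratic reciprocity.
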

	
\subsection{Automorphic $L$-functions}
\label{sec: automorphic}

    Recall that $\pi$ is a fixed self-contragredient representation of $\text{GL}_{M}$ over $\mq$ and that $L(s, \pi)$ is the associated $L$-function.
For any Dirichlet character $\chi$, recall from \eqref{Linverse} that when $\Re(s)>1$, we have
\begin{align*}
\begin{split}
 L^{-1}(s, \pi \otimes \chi)=\sum^{\infty}_{n=1}\frac {a_{\pi}(n)\chi(n)}{n^s}.
\end{split}
\end{align*}
  Here we notice that the Euler product given in \eqref{Linverse} implies that $a_{\pi}(n)$ is a multiplicative function of $n$ such that
\begin{align}
\begin{split}
\label{aexp}
  a_{\pi}(p^k)=
\displaystyle
\begin{cases}
 (-1)^k \displaystyle \sum_{\substack{1 \leq j_1, \cdots, j_k \leq M \\ j_i \text{distinct}}}\prod^{k}_{i=1} \alpha_{\pi}(p, j_i), \quad 0 \leq k \leq M, \\
   0, \quad  k>M,
\end{cases}
\end{split}
\end{align}
where the empty product, as usual, is defined to be $1$. \newline

In particular, under the Ramanujan-Petersson conjecture, we have $|a_{\pi}(p^k)| \leq 2^M$ for all $k \geq 0$, so
\begin{align}
\begin{split}
\label{abound}
  a_{\pi}(n) \ll (2^M)^{\omega(n)} \ll n^{\varepsilon},
\end{split}
\end{align}
  where $\omega(n)$ is the number of distinct primes dividing $n$ and the last estimate above follows from the well-known bound (see \cite[Theorem 2.10]{MVa})
\begin{align}
\label{omegabound}
   \omega(h) \ll \frac {\log h}{\log \log h}, \quad \mbox{for} \quad h \geq 3.
\end{align}

   For a fundamental discriminant $d$ ($d$ is square-free, $d \equiv 1 \pmod 4$ or when $d=4m$ with $m$ square-free, $m \equiv 2,3 \pmod 4$), the character $\chi^{(d)}$ is primitive modulo $|d|$ and the function $L(s, \pi \otimes \chi^{(d)})$
 has an analytical continuation to the entirety of $\comc$ and has a functional equation of the form (see \cite[Section 3.6]{Ru})
\begin{align*}
 \Lambda(s,  \pi \otimes \chi^{(d)}): =&\pi^{-Ms/2}\prod^M_{j=1}\Gamma \left( \frac {s+\mu_{\pi \otimes \chi^{(d)}}(j)}{2}\right) L(s, \pi \otimes
\chi^{(d)}) = \epsilon(s, \pi \otimes \chi^{(d)})\Lambda(1-s,  \pi \otimes \chi^{(d)}),
\end{align*}
  where $\mu_{\pi \otimes \chi^{(d)}}(j) \in \mc$ and satisfies $\Re(\mu_{\pi \otimes \chi_{d}}(j)) \geq 0$ since we are assuming GRH. \newline

   Moreover, as $\delta(\pi) =-1$, we have
\begin{align*}
 \epsilon(s, \pi \otimes \chi^{(d)})=Q^{-s+1/2}_{\pi \otimes \chi^{(d)}},
\end{align*}
  where $Q_{\pi \otimes \chi^{(d)}}$ is the conductor of $L(s, \pi \otimes \chi^{(d)})$ and it is known (see \cite[Section 2]{M&T-B}) that
\begin{align}
\label{Qbound}
  Q_{\pi \otimes \chi^{(d)}} \ll d^M.
\end{align}

\subsection{Bounds on $L$-functions}

   We now gather various estimates on the values of $L$-functions of our interest.
For any quadratic Dirichlet character $\chi$ modulo $n$, we write $\widehat{\chi}$ for the primitive character that induces $\chi$. It follows from \cite[Theroem 9.13]{MVa} that $\widehat{\chi}=\chi^{(d)}$ for some fundamental discriminant $d$.  The conductor of $\widehat{\chi}$ then equals $d$, which divides $n$. We further note that if $d$ is a fundamental discriminant, then one has the following functional equation (see \cite[p. 456]{sound1}) for $L(s, \chi^{(d)})$.
\begin{align} \label{fneqnquad}
  \Lambda(s, \chi^{(d)}) =: \left(\frac {|d|} {\pi} \right)^{s/2}\Gamma \Big( \frac {s}{2} \Big)L(s, \chi^{(d)})=\Lambda(1-s,  \chi^{(d)}).
\end{align}
It follows from \eqref{fneqnquad} and \cite[Theorem 5.19, Corollary 5.20]{iwakow} that, under GRH, we have for any fundamental discriminant $d$ and $\Re(s) \geq 1/2$,
\begin{align}
\label{Lchiupperbound}
\begin{split}
&  \big| L(s, \chi^{(d)}) \big |  \ll |sd|^{\varepsilon}.
\end{split}
\end{align}
Stirling's formula (see \cite[(5.113)]{iwakow}) yields
\begin{align}
\label{Stirlingratio1}
  \frac {\Gamma(\frac {1-s}{2})}{\Gamma (\frac s2)} \ll (1+|s|)^{1/2-\Re (s)}.
\end{align}

  We apply \eqref{fneqnquad} and \eqref{Stirlingratio1}, together with the convexity bound for $L(s, \chi^{(d)})$ (see \cite[Exercise 3, p. 100]{iwakow}) to get that for any fundamental discriminant $d$,
\begin{align} \label{Lchidbound}
\begin{split}
   L(s, \chi^{(d)}) \ll \begin{cases}
   1 \qquad & \Re(s) >1,\\
   (|d|(1+|s|))^{(1-\Re(s))/2+\varepsilon} \qquad & 0< \Re(s) <1,\\
    (|d|(1+|s|))^{1/2-\Re(s)+\varepsilon} \qquad & \Re(s) \leq 0.
\end{cases}
\end{split}
\end{align}
Next, we note that
\begin{align} \label{Ldecomp}
\begin{split}
 L\lz s, \pi \otimes \chi \pz =L\lz s, \pi \otimes \widehat{\chi} \pz \prod_{p|n}\prod^M_{j=1} \left( 1-\frac {\alpha_{\pi}(p,j)\widehat\chi(p)}{p^s} \right).
\end{split}
\end{align}

Applying \eqref{Ramanujanconj} gives that if $\Re(s) \geq 1/2$, then for $1 \leq j \leq M$,
\begin{align*}
 \Big |1-\frac {\alpha_{\pi}(p,j)\widehat\chi(p)}{p^s}\Big |^{-1} \leq (1-2^{-1/2})^{-1}.
\end{align*}

  It follows from the above and \eqref{omegabound} that, for $\Re(s) \geq 1/2$,
\begin{align}
\label{Lninvbound}
\begin{split}
 \prod_{p|n}\prod^M_{j=1}\Big |1-\frac {\alpha_{\pi}(p,j)\widehat\chi(p)}{p^s}\Big |^{-1} \ll ((1-2^{-1/2})^{-M})^{\omega(n)} \ll n^{\varepsilon}.
\end{split}
\end{align}

  Now, we apply  \eqref{Qbound} and \cite[Theorem 5.19]{iwakow} to infer that, under GRH, we have, for $\Re(s) \geq1/2+\varepsilon$,
\begin{align}
\label{Linvupperbound}
\begin{split}
&  \big| L\lz s, \pi \otimes \widehat{\chi} \pz \big |^{-1}  \ll |sn|^{\varepsilon}.
\end{split}
\end{align}

Summarizing \eqref{Ldecomp}--\eqref{Linvupperbound}, we get that for $\Re(s) \geq 1/2+\varepsilon$,
\begin{align}
\label{Linvgenupperbound}
\begin{split}
 \big| L\lz s, \pi \otimes \chi \pz \big |^{-1}  \ll |sn|^{\varepsilon}.
\end{split}
\end{align}

\subsection{Functional equations for quadratic Dirichlet $L$-functions}
	
	A key ingredient needed in our proof of Theorem \ref{Theorem for all characters} is the following functional equation for any Dirichlet character $\chi$ modulo $n$ from \cite[Proposition 2.3]{Cech1}.
\begin{lemma}
\label{Functional equation with Gauss sums}
		Let $\chi$ be any Dirichlet character modulo $n \neq \square$ such that $\chi(-1)=1$. Then we have
\begin{equation}
\label{Equation functional equation with Gauss sums}
			L(s,\chi)=\frac{\pi^{s-1/2}}{n^s}\frac{\Gamma\bfrac{1-s}{2}}{\Gamma\bfrac {s}2} K(1-s,\chi), \quad \mbox{where} \quad K(s,\chi)=\sum_{q=1}^\infty\frac{\tau(\chi,q)}{q^s} .
\end{equation}
\end{lemma}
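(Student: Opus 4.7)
The plan is to run the classical theta-function and Poisson-summation proof of the Dirichlet $L$-function functional equation, but directly at modulus $n$ rather than after first reducing $\chi$ to its primitive inducing character. Because no character reduction is performed, the dual side will involve the full family of Gauss sums $\tau(\chi,q)$ for all $q\geq 1$, and will collapse to $K(1-s,\chi)$ rather than to a single Gauss sum multiplied by $L(1-s,\bar\chi)$.

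First I would set up the theta series
\begin{equation*}
\theta(x,\chi)=\sum_{k\in\intz}\chi(k)e^{-\pi k^{2}x/n}=2\sum_{k=1}^{\infty}\chi(k)e^{-\pi k^{2}x/n},
\end{equation*}
where the second equality uses $\chi(-1)=1$ together with $\chi(0)=0$. Writing $k=nm+r$ with $0\leq r<n$ and applying Poisson summation to the resulting sum over $m$ (using self-duality of the Gaussian) rearranges $\theta(x,\chi)$ into a sum against $\tau(\chi,q)$, yielding the theta-inversion identity
\begin{equation*}
\theta(x,\chi)=\frac{1}{\sqrt{nx}}\sum_{q\in\intz}\tau(\chi,q)e^{-\pi q^{2}/(nx)}=\frac{1}{\sqrt{nx}}\widetilde{\theta}(1/x,\chi),
\end{equation*}
where $\widetilde{\theta}(y,\chi):=2\sum_{q=1}^{\infty}\tau(\chi,q)e^{-\pi q^{2}y/n}$. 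Folding the $q$-sum uses $\tau(\chi,-q)=\chi(-1)\tau(\chi,q)=\tau(\chi,q)$; dropping the $q=0$ term uses $\tau(\chi,0)=\sum_{j}\chi(j)=0$, which requires $\chi$ to be non-principal, and this is precisely the role of the hypothesis $n\neq\square$ (when the intended $\chi$ is the quadratic character $\chi_{n}$).

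Next I would take Mellin transforms: for $\Re(s)$ in the appropriate half-planes,
\begin{equation*}
\int_{0}^{\infty}\theta(x,\chi)x^{s/2-1}\,dx=2\bfrac{n}{\pi}^{s/2}\Gamma\bfrac{s}{2}L(s,\chi),
\end{equation*}
\begin{equation*}
\int_{0}^{\infty}\widetilde{\theta}(x,\chi)x^{(1-s)/2-1}\,dx=2\bfrac{n}{\pi}^{(1-s)/2}\Gamma\bfrac{1-s}{2}K(1-s,\chi).
\end{equation*}
I would split each of these integrals at $x=1$ and, using the theta-inversion identity and its inverse $\widetilde{\theta}(1/x,\chi)=\sqrt{nx}\,\theta(x,\chi)$, transform the $(0,1)$-piece into an integral on $(1,\infty)$ against the dual theta series. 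Both Mellin integrals then reduce to the \emph{same} entire linear combination of the rapidly convergent integrals $\int_{1}^{\infty}\theta(x,\chi)x^{s/2-1}dx$ and $\int_{1}^{\infty}\widetilde{\theta}(x,\chi)x^{(1-s)/2-1}dx$, differing only by an overall factor of $\sqrt{n}$. Comparing these two expressions and solving for $L(s,\chi)$ yields exactly \eqref{Equation functional equation with Gauss sums}.

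The main obstacle is the Poisson-summation step for an \emph{imprimitive} character. In the primitive case one has the clean identity $\tau(\chi,q)=\bar\chi(q)\tau(\chi,1)$, so the dual Dirichlet series immediately factors as $\tau(\chi,1)L(1-s,\bar\chi)$; for a general $\chi\bmod n$ no such factorization is available, which is exactly why $K(1-s,\chi)$ must appear on the right-hand side in place of $L(1-s,\bar\chi)$. One therefore has to work carefully to justify the interchange of summation in the Poisson step, verify the symmetry $\tau(\chi,-q)=\tau(\chi,q)$ and the vanishing $\tau(\chi,0)=0$ needed to fold the $q$-sum, and finally extend the resulting identity from the strip where both Dirichlet series converge absolutely to all of $\comc$ by analytic continuation (using the exponential decay of $\theta$ and $\widetilde{\theta}$ to make the split integrals entire in $s$).
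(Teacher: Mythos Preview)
The paper does not give its own proof of this lemma; it is simply quoted from \cite[Proposition~2.3]{Cech1}. Your approach via the theta series $\theta(x,\chi)=\sum_{k\in\intz}\chi(k)e^{-\pi k^{2}x/n}$ and Poisson summation carried out at the full modulus $n$ is correct and is the standard route (and is essentially how \v{C}ech establishes the result). The key point you isolate is exactly the right one: for primitive $\chi$ the identity $\tau(\chi,q)=\bar\chi(q)\tau(\chi,1)$ collapses the dual side to a single Gauss sum times $L(1-s,\bar\chi)$, whereas for a general (possibly imprimitive) $\chi$ modulo $n$ this factorization fails and the dual Dirichlet series is genuinely $K(1-s,\chi)$. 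Your identification of the roles of the hypotheses is also correct: $\chi(-1)=1$ is what allows folding the $q$-sum via $\tau(\chi,-q)=\tau(\chi,q)$, and in the paper's intended application to the quadratic character $\chi^{(4k)}$ the condition $n\neq\square$ ensures $\chi$ is non-principal, so that $\tau(\chi,0)=\sum_{j}\chi(j)=0$ and the $q=0$ term drops out.
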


\subsection{Some results on multivariable complex functions}
	
   We include in this section some results from multivariable complex analysis. First we need the notation of a tube domain.
\begin{defin}
		An open set $T\subset\mc^n$ is a tube if there is an open set $U\subset\mr^n$ such that $T=\{z\in\mc^n:\ \Re(z)\in U\}.$
\end{defin}
	
   For a set $U\subset\mr^n$, we define $T(U)=U+i\mr^n\subset \mc^n$.  We have the following Bochner's Tube Theorem \cite{Boc}.
\begin{prop}
\label{Bochner}
		Let $U\subset\mr^n$ be a connected open set and $f(z)$ be a function that is holomorphic on $T(U)$. Then $f(z)$ has a holomorphic continuation to the convex hull of $T(U)$.
\end{prop}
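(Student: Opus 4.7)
The plan is to prove Bochner's Tube Theorem via the Fourier-Laplace representation of holomorphic functions on tubes, with convexity entering through the log-convexity of Laplace moments. First, I would observe that $T(\mathrm{conv}(U)) = \mathrm{conv}(T(U))$, since the imaginary directions in a tube already exhaust all of $\mr^n$; hence the task reduces to constructing a holomorphic extension of $f$ from $T(U)$ to $T(\mathrm{conv}(U))$.

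Next I would exhaust $U$ by an increasing sequence of relatively compact, connected subdomains $V_k \subset\subset U$ and work on each $V_k$ separately. After multiplying $f$ by a Gaussian $e^{-\varepsilon |y|^2}$ in the imaginary variables (with the cutoff to be removed by letting $\varepsilon \to 0$), I may assume that $y \mapsto f(x+iy)$ is integrable for each $x \in V_k$. Setting
\[
G_x(\xi) = \int_{\mr^n} f(x+iy)\, e^{-i\xi\cdot y}\, \dif y,
\]
the Cauchy-Riemann equations for $f$ force $e^{-x\cdot\xi} G_x(\xi)$ to be independent of $x \in V_k$; denote the common value by $G(\xi)$. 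Fourier inversion then gives $f(x+iy) = \int_{\mr^n} G(\xi)\, e^{(x+iy)\cdot\xi}\, \dif \xi$ for every $x \in V_k$.

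The key convexity step is that the indicator $I(x) := \int |G(\xi)|\, e^{x\cdot\xi}\, \dif \xi$ is log-convex in $x$: since $e^{((1-t)x_1 + tx_2)\cdot\xi} = (e^{x_1\cdot\xi})^{1-t}(e^{x_2\cdot\xi})^{t}$, H\"older's inequality yields $I((1-t)x_1 + tx_2) \leq I(x_1)^{1-t} I(x_2)^{t}$. Hence $\{x : I(x) < \infty\}$ is a convex set containing $V_k$, and therefore contains $\mathrm{conv}(V_k)$. The formula
\[
\widetilde f(z) = \int_{\mr^n} G(\xi)\, e^{z\cdot\xi}\, \dif \xi
\]
then defines a holomorphic extension of $f$ on $T(\mathrm{conv}(V_k))$, with holomorphy justified by differentiation under the integral sign and absolute convergence. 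Letting $k \to \infty$ and patching these extensions via the identity principle produces the desired holomorphic function on $T(\mathrm{conv}(U)) = \bigcup_k T(\mathrm{conv}(V_k))$.

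The main obstacle is justifying the Fourier-Laplace representation without an a priori growth condition on $f$ in the imaginary directions; the Gaussian cutoff sketched above provides local integrability, but one must verify carefully that the limit $\varepsilon \to 0$ recovers the original $f$ on $T(V_k)$ and that the extensions glue consistently across the $V_k$. The cleanest remedy is to interpret $G$ as an analytic functional (in the sense of Martineau) rather than an honest function, so that its carrier determines the convergence domain and the log-convexity of the indicator diagram carries the argument through unchanged. Alternatively, one can bypass Fourier analysis entirely by reducing to $n = 2$, then to pairs of points $x_1, x_2 \in U$, and invoking the Hartogs Kontinuit\"atssatz to extend $f$ holomorphically across the tube over the segment $[x_1, x_2]$, with the monodromy theorem on the simply connected target $T(\mathrm{conv}(U))$ producing a single-valued global extension.
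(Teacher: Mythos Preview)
The paper does not supply its own proof of this proposition: it is quoted verbatim as Bochner's Tube Theorem with a citation to Bochner's original paper, and is used as a black box in the analytic continuation arguments for the multiple Dirichlet series $A(s,z;\pi)$. So there is no ``paper's proof'' to compare against.

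Your sketch follows one of the standard routes to the theorem, but the Gaussian cutoff step as written does not work: multiplying $f(x+iy)$ by $e^{-\varepsilon|y|^2}$ destroys holomorphy in $z=x+iy$, so the Cauchy--Riemann identity forcing $e^{-x\cdot\xi}G_x(\xi)$ to be independent of $x$ fails for the modified function, and you cannot take the limit $\varepsilon\to 0$ after the fact to recover it. You correctly flag this as the main obstacle and point to the two honest fixes --- Martineau's analytic-functional (hyperfunction) formulation of the boundary value, or the Hartogs continuation / Kontinuit\"atssatz reduction --- either of which gives a complete proof. If you want to keep the Fourier--Laplace flavor, the clean version is to show that for $x$ in a compact subset of $U$ the function $y\mapsto f(x+iy)$ has at most polynomial growth (via Cauchy estimates on polydiscs contained in $T(U)$), so that $G_x$ exists as a tempered distribution; the independence of $e^{-x\cdot\xi}G_x$ from $x$ then holds distributionally and the log-convexity argument goes through for the support/indicator of this distribution. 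In any case, for the purposes of this paper the proposition is imported from the literature, and no proof is expected of you here.
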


The convex hull of an open set $T\subset\mc^n$ is denoted by $\hat T$.  Then we quote the result from \cite[Proposition C.5]{Cech1} concerning the modulus of holomorphic continuations of functions in multiple variables.
\begin{prop}
\label{Extending inequalities}
		Assume that $T\subset \mc^n$ is a tube domain, $g,h:T\rightarrow \mc$ are holomorphic functions, and let $\tilde g,\tilde h$ be their holomorphic continuations to $\hat T$. If  $|g(z)|\leq |h(z)|$ for all $z\in T$, and $h(z)$ is nonzero in $T$, then also $|\tilde g(z)|\leq |\tilde h(z)|$ for all $z\in \hat T$.
\end{prop}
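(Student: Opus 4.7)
The plan is to reduce the claim to propagating the bound $|F| \leq 1$ for $F := g/h$ from the tube $T$ to its convex hull $\hat T$, and then to apply the Phragm\'en--Lindel\"of three-lines theorem on suitable one-dimensional complex slices.

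First I would set $F = g/h$, which is holomorphic on $T$ because $h$ is nonvanishing there, and satisfies $|F(z)| \leq 1$ on $T$ by hypothesis. Proposition \ref{Bochner} then produces a holomorphic extension $\tilde F$ of $F$ to $\hat T$. On $T$ we have $g = F h$, so by the identity theorem for holomorphic functions of several variables the identity $\tilde g = \tilde F \tilde h$ persists on all of $\hat T$. Hence the inequality $|\tilde g(z)| \leq |\tilde h(z)|$ on $\hat T$ reduces to the single claim $|\tilde F(z)| \leq 1$ on $\hat T$.

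To establish this, I would fix $z_0 = x_0 + i y_0 \in \hat T$ and use Carath\'eodory's theorem to write $x_0 \in \hat U$ as a finite convex combination of points in $U$; a short induction on the number of summands reduces matters to the two-point case $x_0 = \lambda x_1 + (1-\lambda) x_2$ with $x_1, x_2 \in U$ and $\lambda \in [0,1]$. The key geometric step is to parametrize a complex line in $\hat T$ via $z(\zeta) := x_2 + \zeta(x_1 - x_2) + i y_0$: writing $\zeta = \xi + i \eta$, the real part of $z(\zeta)$ equals $(1-\xi)x_2 + \xi x_1$, which lies in $\hat U$ for $\xi \in [0,1]$ and in $U$ for $\xi \in \{0,1\}$. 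Thus $\phi(\zeta) := \tilde F(z(\zeta))$ is holomorphic on the open strip $0 < \Re \zeta < 1$, continuous on its closure, and bounded in modulus by $1$ on both bounding lines. Three-lines would then deliver $|\phi(\lambda)| \leq 1$, which is exactly $|\tilde F(z_0)| \leq 1$.

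The hard part will be justifying the growth hypothesis that Phragm\'en--Lindel\"of requires, since a priori $\phi$ could grow rapidly as $|\Im \zeta| \to \infty$: the points $z(\zeta)$ then run to infinity in the imaginary direction of $\hat T$, where $\tilde F$ is only known to be holomorphic, not bounded. The cleanest way I see to bypass this is to work directly with the plurisubharmonic function $\log|\tilde F|$ on $\hat T$ and invoke the standard fact that for a plurisubharmonic function on a tube domain the supremum over imaginary fibres is a convex function of the real part; this convexity, together with the bound $\log|\tilde F| \leq 0$ on $U$, forces $\log|\tilde F| \leq 0$ throughout $\hat U$ by Jensen's inequality. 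Combined with the factorisation $\tilde g = \tilde F \tilde h$ already established, this gives the desired inequality on $\hat T$.
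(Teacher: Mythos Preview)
The paper does not prove this proposition; it is quoted from \cite[Proposition~C.5]{Cech1}. So there is no ``paper's proof'' to compare against, and I will simply assess your argument on its own.

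Your reduction is sound: setting $F=g/h$, extending it via Proposition~\ref{Bochner}, and checking $\tilde g=\tilde F\,\tilde h$ on $\hat T$ by the identity theorem correctly reduces the claim to showing $|\tilde F|\le 1$ on $\hat T$.

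The gap is in the last step. You correctly flag that the three-lines argument on a one-dimensional slice needs an a priori growth bound on $\phi$ that is unavailable, and you propose to bypass this via the convexity of $x\mapsto\sup_y\log|\tilde F(x+iy)|$. But that convexity statement for a plurisubharmonic function on a tube also requires the function to be bounded above on each imaginary fibre; without such a bound there are counterexamples already in one variable (e.g.\ the harmonic function $\Re(e^{iz})$ on a vertical strip, whose fibre suprema are $+\infty$ on a non-convex set). Since boundedness of $\tilde F$ on fibres over $\hat U\setminus U$ is precisely what you are trying to establish, the plurisubharmonic route is circular in exactly the same way as three-lines.

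A clean fix avoids growth conditions entirely and uses only Proposition~\ref{Bochner} once more. Suppose for contradiction that $|\tilde F(z_0)|>1$ for some $z_0\in\hat T$. Then $H:=1/\bigl(F-\tilde F(z_0)\bigr)$ is holomorphic on $T$, since $|F|\le 1<|\tilde F(z_0)|$ there. By Proposition~\ref{Bochner} it extends to some $\tilde H\in\mathcal O(\hat T)$. On $T$ we have $\tilde H\cdot\bigl(\tilde F-\tilde F(z_0)\bigr)=1$, hence by the identity theorem this holds on all of $\hat T$; evaluating at $z_0$ gives $\tilde H(z_0)\cdot 0=1$, a contradiction. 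Therefore $|\tilde F|\le 1$ on $\hat T$, and combined with your factorisation $\tilde g=\tilde F\,\tilde h$ the proof is complete.
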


\section{Proof of Theorem \ref{Theorem for all characters}}

  Recall from \eqref{Linverse} that for any Dirichlet character $\chi$ modulo $n$, we have, for $\Re(s)>1$,
\begin{align}
\begin{split}
\label{Linv}
 L^{-1}(s, \pi \otimes \chi)=\sum^{\infty}_{m=1}\frac {a_{\pi}(m)\chi(m)}{m^s}.
\end{split}
\end{align}
  We next deduce from \eqref{abound} and Perron's formula given as in Theorem 5.2 and Corollary 5.3 in \cite{MVa} that with
$\sigma_0 = 1 + 1/ \log x$ and $1 \leq  T \leq x$ for any $x \geq 2$,
\begin{align*}
\begin{split}
  \sum_{m \leq x}a_{\pi}(m)\chi(m)=\frac 1{2\pi i}\int\limits^{\sigma_0+iT}_{\sigma_0-iT}\frac {x^s}{L(s, \pi \otimes \chi)} \frac{\dif s}{s} +O\left( \frac {x^{1+\varepsilon}}{T} \right).
\end{split}
\end{align*}

  Upon shifting the line of integration above to $\Re(s)=1/2+\varepsilon$, applying \eqref{Linvgenupperbound} to estimate $L(s, \pi \otimes \chi)^{-1}$, setting $T=x^{1/2}$ and invoking GRH, we get
\begin{align}
\begin{split}
\label{asumestimation}
  \sum_{m \leq x}a_{\pi}(m)\chi(m) \ll n^{\varepsilon} x^{1/2+\varepsilon}.
\end{split}
\end{align}

  It follows that the series in \eqref{Linv} has holomorphic continues to the region $\Re(s)>1/2+\varepsilon$ under GRH. This allows us to define for $\Re(s)$ large enough and $\Re(z)>1/2$,
\begin{align}
\label{Aswzexp}
\begin{split}
A(s, z;\pi)=& \sum_{\substack{(n,2)=1 }}\frac 1{L^{(2)}(z,\pi \otimes \chi_{n})n^s}
=\sum_{\substack{(nk,2)=1}}\frac{a_{\pi}(k)\chi_n(k)}{k^zn^s}= \sum_{\substack{(k,2)=1}}\frac{a_{\pi}(k)L(s,\chi^{(4k)} )}{k^z}.
\end{split}
\end{align}

 We shall devote the next few sections to the study of the analytic properties of $A(s, z; \pi)$, which are crucial in the proof of Theorem \ref{Theorem for all characters}.
	
\subsection{First region of absolute convergence of $A(s,z;\pi)$}

    Note that quadratic reciprocity law implies that if $n \equiv 1 \pmod 4$, we have $L^{(2)}(s,\pi \otimes \chi_n)=L(s,\pi \otimes \chi^{(4n)})$, while $n \equiv -1 \pmod 4$ gives $L^{(2)}(s,\pi \otimes \chi_n)=L(s, \pi \otimes \chi^{(-4n)})$.  It follows from this and the series representation for $A(s,z;\pi)$ given by the first equality in \eqref{Aswzexp} that, for $\Re(z)\geq 1/2+\varepsilon$,
\begin{align}
\begin{split}
\label{Abound}
		A(s,z;\pi)=& \sum_{\substack{(n,2)=1}}\frac{1}{L^{(2)}(z, \pi \otimes \chi_n)n^s} = \sum_{\substack{n \equiv 1 \shortmod 4}}\frac{1}{L(z, \pi \otimes \chi^{(4n)})n^s}+\sum_{\substack{n \equiv -1 \shortmod 4}}\frac{1}{L(z, \pi \otimes \chi^{(-4n)})n^s}\\
\ll&  |z|^{\varepsilon}\sum_{\substack{n}}\frac{1}{n^{s-\varepsilon}},
\end{split}
\end{align}
  where the last bound is a consequence of \eqref{Linvgenupperbound}.  It follows from the above that $A(s,z;\pi)$ converges absolutely in the region
\begin{equation*}
		S_0=\{(s,z): \Re(s)>1, \ \Re(z)> \tfrac{1}{2} \}.
\end{equation*}

  We also deduce from the last expression of \eqref{Aswzexp} that $A(s,z;\pi)$ is given by the series
\begin{align}
\label{Sum A(s,w,z) over n}		
A(s,z;\pi)=&\sum_{\substack{(k,2)=1}}\frac{a_{\pi}(k) L( s, \chi^{(4k)})}{k^z}.
\end{align}

We write any odd integer $k$ uniquely as $k=k_0k^2_1$ with $k_0$ odd and square-free. Then
\begin{align}
\label{Lsmkbound}	
\begin{split}	
L( s, \chi^{(4k)}) \ll & (4k)^{\max (0,-\Re(s))+\varepsilon} |L( s, \widehat\chi^{(4k)})|,
\end{split}
\end{align}
 where we recall that $\widehat\chi^{(4k)}$ denotes the primitive Dirichlet character that induces $\chi^{(4k)}$. \newline

 We now apply \eqref{Lchiupperbound} and \eqref{Lsmkbound} for the case $\Re(s) \geq 1/2$, together with the functional equation in \eqref{fneqnquad} for $L( s, \widehat\chi^{(4k)})$ with $\Re(s)<1/2$ to \eqref{Sum A(s,w,z) over n} to see that, except for a simple pole at $s=1$ arising from the summands with $k=\square$, the sum over $k$ in \eqref{Sum A(s,w,z) over n} converges absolutely in the region
\begin{align*}
		S_1=& \{(s,z):\hbox{$\Re(z)>1,\ \Re(s)\geq \frac 12$}\} \bigcup \{(s,z):\hbox{$0 \leq \Re(s)< \frac 12, \ \Re(s+z)>\frac 32$}\}\bigcup \{(s, z):\hbox{$\Re(s)<0,  \ \Re(2s+z)>\frac 32 $}\}.
\end{align*}

	The convex hull of $S_0$ and $S_1$ is
\begin{equation}
\label{Region of convergence of A(s,w,z)}
		S_2=\{(s,z):\Re(z)> \tfrac{1}{2}, \Re(s+z)> \tfrac{3}{2}, \ \Re(2s+z)>\tfrac32\}.
\end{equation}

   We thus conclude by our discussions above and Proposition \ref{Bochner} that $(s-1)A(s, z;\pi)$ has meromorphic continuation to the region $S_2$.

\subsection{Residue of $A(s,z;\pi)$ at $s=1$}
	
	It follows from \eqref{Sum A(s,w,z) over n} that $A(s,z;\pi)$ has a pole at $s=1$ arising from the terms with $k=\square$. In this case, we have
\begin{equation*}
		L( s, \chi^{(4k)})=\zeta(s)\prod_{p|2k} \left( 1-\frac1{p^s} \right).
\end{equation*}
    Recall that the residue of $\zeta(s)$ at $s = 1$ equals $1$.  Then we have	
\begin{equation*}
 \res_{s=1}A(s,z;\pi)= \frac 12\sum_{\substack{k=\square \\ (k,2)=1 }}\frac{a_{\pi}(k)}{k^z}\prod_{p|k}(1-\frac1{p}).
\end{equation*}

   We can write the sum above as an Euler product, slightly abusing notation by writing $p^k$ for the prime factors of $k$.
We thus obtain
\begin{align}
\label{residues=1}
\begin{split}
 \res_{s=1}A(s,z;\pi)=&\frac12\prod_{p>2}\lz1+\lz 1-\frac 1p \pz \sum^{\infty}_{k=1}\frac {a_{\pi}(p^{2k})}{p^{2kz}}\pz  = P(z;\pi),
\end{split}
\end{align}
with $P(z;\pi)$ defined in \eqref{Pz}.

\subsection{Second region of absolute convergence of $A(s,z;\pi)$}

Using \eqref{Sum A(s,w,z) over n},
\begin{align} \begin{split}
\label{A1A2}
 A(s,z;\pi) =& \sum_{\substack{(k,2)=1 \\ k =  \square}}\frac{a_{\pi}(k)L( s, \chi^{(4k)})}{k^z} +\sum_{\substack{(k,2)=1 \\ k \neq \square}}\frac{a_{\pi}(k)L( s, \chi^{(4k)})}{k^z} \\
=&  \sum_{\substack{(k,2)=1 \\ k =  \square}}\frac{a_{\pi}(k)\zeta(s)\prod_{p | 2k}(1-p^{-s}) }{k^z}
+\sum_{\substack{(k,2)=1 \\ k \neq \square}}\frac{a_{\pi}(k)L( s, \chi^{(4k)}) }{k^z} =: \ A_1(s,z;\pi)+A_2(s,z;\pi), \quad \mbox{say}.
\end{split}
\end{align}

   We first note that
\begin{align*}
\begin{split}
 A_1(s,z;\pi) =&  \zeta^{(2)}(s) \sum_{\substack{(k,2)=1 \\ k =  \square}}\frac {a_{\pi}(k)\prod_{p | k}(1-p^{-s})}{k^{z}}
 = \zeta^{(2)}(s) \prod_{(p, 2)=1}\Big (1+ (1-p^{-s})\sum^{\infty}_{k=1}\frac {a_{\pi}(p^k)}{p^{2kz}}\Big ).
\end{split}
\end{align*}

  It follows from the above that except for a simple pole at $s=1$, $A_1(s,z;\pi)$ is holomorphic in the region
\begin{align}
\label{S3}
		S_3=\Bigg\{(s, z):\ &\Re(s+2z)>1, \ \Re(2z)>1 \Bigg\}.
\end{align}

Now, we apply the functional equation given in Lemma \ref{Functional equation with Gauss sums} for $L(s, \chi^{(4k)})$ in the case $k \neq \square$ by observing that $\chi^{(4k)}$ is a Dirichlet character modulo $4k$ for any $k\geq1$ with $\chi^{(4k)}(-1)=1$.  Thus from \eqref{Equation functional equation with Gauss sums},
\begin{align}
\begin{split}
\label{Functional equation in s}
 A_2(s,z;\pi) =\frac{\pi^{s-1/2}}{4^s}\frac {\Gamma (\frac{1-s}2)}{\Gamma(\frac {s}2) } C(1-s, s+z;\pi),
\end{split}
\end{align}
  where $C(s,z;\pi)$ is given by the triple Dirichlet series
\begin{align*}
		C(s, z;\pi)=& \sum_{\substack{q, k \\ (k,2)=1 \\ k \neq \square}}\frac{a_{\pi}(k)\tau(\chi^{(4k)}, q)}{q^sk^z} = \sum_{\substack{q, k \\ \substack{(k,2)=1}}}\frac{a_{\pi}(k)\tau(\chi^{(4k)}, q)}{q^sk^z}-\sum_{\substack{q, k \\ (k,2)=1 \\ k = \square}}\frac{a_{\pi}(k)\tau(\chi^{(4k)}, q)}{q^sk^z}.
\end{align*}	

 By \eqref{Region of convergence of A(s,w,z)}, \eqref{S3} and the functional equation \eqref{Functional equation in s}, we see that $C(s,z;\pi)$ is initially defined in the region
\begin{equation*}
		\{(s,z):\ \Re(s+z)>\tfrac{3}{2}, \ \Re(z)> \tfrac{3}{2}, \ \Re(s+2z)> 2, \ \Re(z-s)>\tfrac 12 \}.
\end{equation*}
	To extend this region, we interchange the summations in $C(s,z;\pi)$ to obtain that
\begin{align}
\label{Cexp}
\begin{split}
  C(s, z;\pi)=& \sum^{\infty}_{q =1}\frac{1}{q^s}\sum_{\substack{(k,2)=1}}\frac{a_{\pi}(k)\tau( \chi^{(4k)}, q)}{k^z}-
  \sum^{\infty}_{q =1}\frac{1}{q^s}\sum_{\substack{(k,2)=1 \\ k = \square}}\frac{a_{\pi}(k)\tau( \chi^{(4k)}, q )}{k^z}\\
=: & \ C_1(s,z;\pi)-C_2(s,z;\pi).
\end{split}
\end{align}

   We now define for two Dirichlet characters $\psi,\psi'$ whose conductors divide $8$,
\begin{align} \begin{split}
\label{C12def}
	C_1(s,z;\psi,\psi',\pi)=: \sum_{k,q\geq 1}\frac{a_{\pi}(k)G\lz \chi_k,q\pz\psi(k)\psi'(q)}{k^zq^s} \quad \mbox{and} \quad
 C_2(s,z;\psi,\psi',\pi)=: \sum_{k,q\geq 1}\frac{a_{\pi}(k^2)G\lz \chi_{k^2},q\pz\psi(k)\psi'(q)}{k^{2z}q^s},
\end{split}
\end{align}
with $G(\chi_n, q)$ defined in \eqref{key}. \newline

  Following the arguments in \cite[\S 6.4]{Cech1} and utilizing Lemma \ref{Lemma changing Gauss sums}, we get that
\begin{align}
\begin{split}
\label{C(s,w,z) as twisted C(s,w,z)}
		C_1(s, z;\pi)=&
			-2^{-s}\big( C_1(s, z;\psi_2,\psi_1,\pi)+C_1(s,z;\psi_{-2},\psi_1,\pi)\big) +4^{-s}\big( C_1(s, z;\psi_1,\psi_0,\pi)+C_1(s,z;\psi_{-1},\psi_0,\pi)\big)\\
			& \hspace*{3cm} +C_1(s, z;\psi_1,\psi_{-1},\pi)-C_1(s, z;\psi_{-1},\psi_{-1},\pi), \\
C_2(s,z;\pi)=&
			-2^{1-s}C_2(s,w,z;\psi_1,\psi_1,\pi)+2^{1-2s}C_2(s,w,z;\psi_1,\psi_0,\pi).
\end{split}
\end{align}

Every integer $q \geq 1$ can be written uniquely as $q=q_1q^2_2$ with $q_1$ square-free. We may thus write
\begin{equation}
\label{Cidef}
		C_i(s, z;\psi,\psi',\pi)=\sumstar_{q_1}\frac{\psi'(q_1)}{q_1^s}\cdot D_i(s, z; q_1, \psi,\psi', \pi), \quad i =1,2,
\end{equation}
	where $\sum^*$ means that the sum runs over square-free integers and
\begin{align}
\label{Didef}
\begin{split}
 D_1(s, z; q_1, \psi,\psi', \pi)=&\sum_{k,q_2=1}^\infty\frac{a_{\pi}(k) G\lz \chi_{k},q_1q^2_2\pz\psi(k)\psi'(q^2_2)}{k^zq^{2s}_{2}} \quad \mbox{and} \\
 D_2(s, z; q_1, \psi,\psi', \pi)=&\sum_{k,q_2=1}^\infty\frac{a_{\pi}(k^2) G\lz \chi_{k^2},q_1q^2_2\pz\psi(k)\psi'(q^2_2)}{k^{2z}q^{2s}_{2}}.
\end{split}
\end{align}

The following result articulates the analytic properties of $D_1(s, z; q_1,  \psi,\psi', \pi)$.
\begin{lemma}
\label{Estimate For D(w,t)}
 With the notation as above and $\psi \neq \psi_0$, the functions $D_i(s, z; q_1, \psi,\psi', \pi)$ with $i=1,2$ have holomorphic continuations to the region
\begin{equation*}
		\{(s,z):\ \Re(s)>1/2,\ \Re(z)>1 \}.
\end{equation*}
Moreover, in the region $\Re(s)>1+\varepsilon$ and $\Re(z)>1+\varepsilon$, we have, for $i=1,2$,
\begin{align}
\label{Diest}
			|D_i(s, z; q_1,  \psi, \psi', \pi)|\ll |zq_1|^{\varepsilon}.
\end{align}		
\end{lemma}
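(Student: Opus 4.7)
The strategy is to factor each $D_i$ as an Euler product in a region of absolute convergence, compute the local factor at each prime via Lemma \ref{lem:Gauss}, and recognize the leading analytic behaviour as $\zeta^{(2q_1)}(2s)$ times, for $i=1$, an inverse $L$-function factor, and, in each case, an absolutely convergent correction.

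First, note that the summand in \eqref{Didef} is jointly multiplicative: $a_\pi$ is multiplicative by \eqref{aexp}, $\psi,\psi'$ are Dirichlet characters, and Lemma \ref{lem:Gauss} gives $G(\chi_{mn},q)=G(\chi_m,q)G(\chi_n,q)$ for $(m,n)=1$. The Gauss sum $G(\chi_{p^{k_p}},q)$ depends on $q=q_1q_2^2$ only through $v_p(q)=v_p(q_1)+2v_p(q_2)$ and, in the odd case of Lemma \ref{lem:Gauss}, through the Kronecker symbol $(qp^{-v_p(q)}/p)$; the square factor $q_2^2$ contributes $1$ to this symbol, leaving a factor depending only on $q_1$ and $p$. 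Hence $D_i=\prod_p D_{i,p}$.

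For primes $p\nmid 2q_1$ we apply Lemma \ref{lem:Gauss} with $a=2v_p(q_2)$. For $D_{2,p}$ only even $k_p$ contribute (the odd case $k_p=a+1$ is impossible since $a$ is even), and summing a geometric series in $v_p(q_2)$ using $\psi'(p)^2=1$ yields
\[
 D_{2,p}=\frac{1}{1-p^{-2s}}\left(1+\sum_{1\le m\le M/2}a_\pi(p^{2m})\psi(p)^m\varphi(p^{2m})p^{-2m(s+z)}\right),
\]
whose bracket is $1+O(p^{-2(\Re(s+z)-1)})$. Hence $D_2=\zeta^{(2q_1)}(2s)H_2(s,z)$ with $H_2$ absolutely convergent and bounded in $\Re(s+z)>3/2$. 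For $D_{1,p}$ the odd values $k_p=2m+1$ also contribute, forced by Lemma \ref{lem:Gauss} to $v_p(q_2)=m$, with leading ($m=0$) term equal to $\chi^{(q_1)}(p)a_\pi(p)\psi(p)p^{-(z-1/2)}$. This matches precisely the linear coefficient in $p^{-(z-1/2)}$ of the local Euler factor of $L^{-1}(z-\tfrac{1}{2},\pi\otimes\psi\chi^{(q_1)})$ coming from \eqref{Linverse}. Extracting it leads to the factorisation
\[
 D_1=\zeta^{(2q_1)}(2s)\cdot L^{-1,(2q_1)}\!\bigl(z-\tfrac{1}{2},\pi\otimes\psi\chi^{(q_1)}\bigr)\cdot H_1(s,z;q_1,\psi,\psi',\pi),
\]
where after this cancellation the local factors of $H_1$ equal $1+O(p^{-2(\Re(z)-1/2)}+p^{-2(\Re(s+z)-1)}+p^{-(\Re(z)+2\Re(s)-1/2)})$, hence $H_1$ converges absolutely and is bounded in the region $\Re(z)>1$, $\Re(s+z)>3/2$.

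The bad primes $p\mid 2q_1$ contribute local factors that are polynomials in $p^{-s},p^{-z}$ of degree bounded in $M$; by \eqref{Ramanujanconj} and \eqref{omegabound} their aggregate contribution is $O(q_1^{\varepsilon})$. Meromorphic continuation of $D_i$ to $\Re(s),\Re(z)>1$ then follows: $\zeta^{(2q_1)}(2s)$ is meromorphic in $\mc$ with a simple pole at $s=1/2$ outside the region, and under GRH $L^{-1}(z-\tfrac12,\pi\otimes\psi\chi^{(q_1)})$ is holomorphic for $\Re(z)>1$ thanks to the non-vanishing expressed in \eqref{Linvgenupperbound}. For the bound \eqref{Diest} in $\Re(s),\Re(z)>1+\varepsilon$, combine $|\zeta(2s)|=O(1)$, the estimate $|L^{-1}(z-\tfrac12,\pi\otimes\psi\chi^{(q_1)})|\ll|zq_1|^{\varepsilon}$ from \eqref{Linvgenupperbound} (the conductor of $\psi\chi^{(q_1)}$ being $\ll q_1$), and the boundedness of $H_i$ and of the bad-prime factor. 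The main technical obstacle is the reorganisation of $D_1$: identifying the correct $L$-function factor so that the remaining Euler product $H_1$ is absolutely convergent in a region strictly larger than the one in which the defining series for $D_1$ converges. This precise alignment of the linear Euler coefficient is what permits the meromorphic continuation into $\Re(s),\Re(z)>1$ and the $|zq_1|^{\varepsilon}$ bound.
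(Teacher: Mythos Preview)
Your proof is correct and follows essentially the same route as the paper's: write $D_i$ as an Euler product via joint multiplicativity, compute the local factors through Lemma~\ref{lem:Gauss}, extract $L^{-1}(z-\tfrac12,\pi\otimes\psi\chi^{(q_1)})$ from $D_1$, and bound the remaining Euler product together with the $L^{-1}$ factor via \eqref{Linvgenupperbound}. The only difference is cosmetic: you additionally factor out $\zeta^{(2q_1)}(2s)$, whereas the paper absorbs this contribution into its correction factor $E_1$, which is harmless since $\zeta(2s)$ is bounded for $\Re(s)>1+\varepsilon$.
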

\begin{proof}
   As the proofs are similar, we consider only the case for $D_1(s, z; q_1,  \psi,\psi', \pi)$ here.  Notice that $D_1(s, z; q_1, \psi,\psi',\pi)$ are jointly multiplicative functions of $l,q_2$ for $i=1,2$ by Lemma \ref{lem:Gauss}. Moreover, as $\psi \neq \psi_0$, we may assume that $k$ is odd.  We then write $D_1(s, z; q_1, \psi,\psi', \pi)$  into an Euler product to see that
\begin{align}
\label{Dexp}
\begin{split}	
 &	D_1(s, z; q_1,  \psi, \psi', \pi)= \lz \sum_{l=0}^\infty\frac{ \psi'(2^{2l})}{2^{2ls}}\pz \prod_{(p,2)=1} \lz \sum_{k,l=0}^\infty\frac{ \psi(p^k)a_{\pi}(p^k)\psi'(p^{2l})G\lz \chi_{p^k}, q_1p^{2l} \pz }{p^{kz+2ls}}\pz.
\end{split}
\end{align}

For a fixed $p \neq 2$, we have
\begin{align*}
\begin{split}
 \sum_{k,l=0}^\infty\frac{ \psi(p^k)a_{\pi}(p^k)\psi'(p^{2l})G\lz \chi_{p^k}, q_1p^{2l} \pz }{p^{kz+2ls}}
= \sum_{k=0}^\infty \frac{ \psi(p^k)a_{\pi}(p^k)G\lz \chi_{p^k}, q_1 \pz }{p^{kz}}  +
\sum_{k \geq 0, \ l \geq 1}\frac{ \psi(p^k)a_{\pi}(p^k)\psi'(p^{2l})G\lz \chi_{p^k}, q_1p^{2l} \pz }{p^{kz+2ls}}.
\end{split}
\end{align*}

  Further, as $q_1$ is square-free, we deduce from Lemma \ref{lem:Gauss} that
\begin{align*}
\begin{split}
	|G( \chi_{p^k}, q_1p^{2l} )| \ll p^k, \quad G(\chi_{p^k}, q_1p^{2l})=0, \quad k \geq 2l+3.
\end{split}
\end{align*}

Now the above, together with \eqref{abound}, yeilds that if $\Re(s)>1/2$ and $\Re(z)>1$,
\begin{align}
\label{Dk1est}	
\begin{split}
\sum_{k \geq 0, l \geq 1}  \frac{ \psi(p^k)a_{\pi}(p^k)\psi'(p^{2l})G( \chi_{p^k}, q_1p^{2l} ) }{p^{kz+2ls}}
= & \sum_{l \geq 1}\frac{ \psi'(p^{2l})G( \chi_{1}, q_1p^{2l} ) }{p^{2ls}}+\sum_{l, k \geq 1}\frac{ \psi(p^k)a_{\pi}(p^k)\psi'(p^{2l})G( \chi_{p^k}, q_1p^{2l} ) }{p^{kz+2ls}} \\
\ll &\left| p^{-2s} + p^{-z+1}\sum_{l \geq 1}\frac{1}{p^{2ls}}(2l+3) \right|  \ll \left|  p^{-2s}+p^{-2s-z+1} \right|.
\end{split}
\end{align}

Furthermore, by Lemma \ref{lem:Gauss},
\begin{align}
\label{suml0}	
\begin{split}
 & \sum_{k=0}^\infty \frac{ \psi(p^k)a_{\pi}(p^k)G\lz \chi_{p^k}, q_1 \pz }{p^{kz}}
= \begin{cases}
\displaystyle{1+\frac{ \psi(p)a_{\pi}(p)\chi^{(q_1)}(p)}{p^{z-1/2}}}, & p \nmid q_1, \\ \\
\displaystyle{1-\frac {a_{\pi}(p^2)}{p^{2z-1}}}, & p |q_1.
\end{cases}
\end{split}
\end{align}

  We readily deduce from \eqref{Linverse}, \eqref{aexp}, \eqref{Dexp}--\eqref{suml0} that for $\Re(s)>1/2$, $\Re(z)>1$,  we may write
\begin{align}
\label{D1decomp}
\begin{split}
	D_1(s, z; q_1,  \psi, \psi', \pi)=L(z-\tfrac 12, \pi \otimes \chi^{(q_1)}\psi)^{-1}E_1(s, z; q_1, \psi, \psi', \pi),
\end{split}
\end{align}
 where in the region $\Re(z)>1+\varepsilon$, $\Re(s)>1+\varepsilon$, we have
\begin{align}
\label{D2est}	
\begin{split}
& E_1(s, z; q_1,  \psi, \psi', \pi) \ll \prod_{p|q_1}(1+O(p^{-2s}+p^{-2s-z+1/2}+p^{-z+1/2})) \ll q^{\varepsilon}_1.
\end{split}
\end{align}

  As the right-hand side of \eqref{D1decomp} is holomorphic in the region $\Re(z)>1$ and $\Re(s)>1$, the first assertion of the lemma follows.  We then apply \eqref{Linvgenupperbound} and \eqref{D2est} to obtain the estimate in \eqref{Diest}, thus completing the proof of the lemma.
\end{proof}
	
  It follows from \eqref{Cidef}, \eqref{C(s,w,z) as twisted C(s,w,z)} and the above lemma that the functions $C_i(s,z;\pi), i=1,2$ are now extended to the region
\begin{equation*}
		\{(s,z):\ \Re(s)>1,\ \Re(z)>1 \}.
\end{equation*}
	Using \eqref{A1A2}--\eqref{Functional equation in s} and the above enlargment, we extend $(s-1)A(s, z;\pi)$ to the region
\begin{equation*}
		S_4=\{(s, z):\ \Re(s+2z)>1, \ \Re(2z)>1, \ \Re(1-s)>1, \ \Re(s+z)>1 \}.
\end{equation*}
The condition $\Re(s+2z)>1$ is superseded by $\Re(2z)>1$ and $\Re(s+z)>1$ so that
\begin{equation*}
		S_4=\{(s, z):\ \Re(s+z)>1, \ \Re(s)<0 \},
\end{equation*}
as $\Re (2z) >1$ is also contained in the conditions appearing above. \newline

	We then get that the convex hull of $S_2$ and $S_4$ contains
\begin{align*}
		S_5=\{(s, z):\ \Re(z)> \tfrac12, \ \Re(s+z)>1, \ \Re(s+2z)>2 \}.
\end{align*}
Now Proposition~\ref{Bochner} again implies that $(s-1)A(s,z;\pi)$ converges absolutely in the region $S_5$.

\subsection{Bounding $A(s,z;\pi)$ in vertical strips}
\label{Section bound in vertical strips}
	
In this section, we estimate $|A(s,z;\pi)|$ in vertical strips, necessary in evaluating the integral in \eqref{Integral for all characters} below.  For the previously defined regions $S_j$, we set for any fixed $0<\delta <1/1000$,
\begin{equation*}
		\tilde S_j=S_{j,\delta}\cap\{(s, z):\Re(s)>-5/2 , \; \Re (z) \leq 2\}, \quad \mbox{where} \quad S_{j,\delta}= \{ (s,z)+\delta (1,1) : (s, z) \in S_j \} .
\end{equation*}
Set
\begin{equation*}
		p(s)=s-1,
\end{equation*}
  so that $p(s)A(s,z;\pi)$ is an analytic function in the considered regions. \newline

   We first deduce, from the bound for $A(s,z;\pi)$ in \eqref{Abound}, that in $\tilde S_0$, under GRH,
\begin{align*}
\begin{split}
      |p(s)A(s,z;\pi)| \ll |(s+3)z|.
\end{split}
\end{align*}

   On the other hand, the expression for $A(s,z;\pi)$ in \eqref{Sum A(s,w,z) over n} and \eqref{Lchidbound} can be used to estimate $L( s, \chi^{(4k)})$ and yields that in the region $\tilde S_1$,
\begin{align*}
\begin{split}
      |p(s)A(s,z;\pi)| \ll |(s+3)^8 z|.
\end{split}
\end{align*}

Proposition \ref{Extending inequalities} gives that in the convex hull of $\tilde S_2$ of $\tilde S_0$ and $\tilde S_1$,
\begin{equation}
\label{AboundS2}
		|p(s)A(s,z;\pi)|\ll |(s+3)^8 z| .
\end{equation}

   Moreover, by the convexity bound contained in \eqref{Lchidbound} for $\zeta(s)$ (corresponding to the case $d=1$), we see that in the region $\tilde S_3$, we have under GRH,
\begin{align}
\label{A1bound}
		|A_1(s,z;\pi)| \ll  |s+3|^{4}.
\end{align}
We note here that one can easily check, following the computations in the sequel, subconvexity bounds do not yield any improvement to our main result. \newline

   Also, by \eqref{Cexp}--\eqref{Didef} and Lemma \ref{Estimate For D(w,t)}, we have
\begin{equation}
\label{Csbound}
		|C(s,z;\pi)|\ll |z| \quad \mbox{for} \quad \Re(s)>1+\varepsilon , \; \Re(z)>1+\varepsilon.
\end{equation}

  Now, applying \eqref{Stirlingratio1}, \eqref{A1A2}, the functional equation \eqref{Functional equation in s} and the bounds in \eqref{A1bound} and \eqref{Csbound}, we obtain by Proposition \ref{Extending inequalities} that, in the region $\tilde S_4$,
\begin{equation}
\label{AboundS3}
		|p(s)A(s,z;\pi)|\ll (|z(s+3)^5|.
\end{equation}

   We deduce from \eqref{AboundS2}, \eqref{AboundS3} and apply Proposition \ref{Extending inequalities} again to conclude that in $\tilde S_5$,
\begin{equation}
\label{AboundS4}
		|p(s)A(s,z;\pi)|\ll |z(s+3)^8|.
\end{equation}
	
\subsection{Completion of proof}

   Recall that the Mellin transform $\hat{f}$ of any function $f$ is defined as
\begin{align*}
     \widehat{f}(s) =\int\limits^{\infty}_0f(t)t^s\frac {\dif t}{t}.
\end{align*}

   We now apply the Mellin inversion to obtain that
\begin{equation}
\label{Integral for all characters}
		\sum_{\substack{(n,2)=1}}\frac{1}{L^{(2)}(\tfrac{1}{2}+\alpha, \pi \otimes \chi_{n})}w \bfrac {n}X=\frac1{2\pi i}\int\limits_{(2)}A\lz s,\tfrac12+\alpha; \pi \pz X^s\widehat w(s) \dif s,
\end{equation}
  where $A(s, z;\pi)$ is defined in \eqref{Aswzexp}. \newline

    We then shift the integral in \eqref{Integral for all characters} to $\Re(s)=1-2\alpha+\varepsilon$. We encounter a simple pole at $s=1$ in this process with the corresponding residue given in \eqref{residues=1} with $z=1/2+\alpha$ there. This yeilds the main terms in \eqref{Asymptotic for ratios of all characters}. \newline

We shall use \eqref{AboundS4} to estimate the integral on the new line.  On this line, we have
\begin{equation}
\label{Bound in vertical strips}
		|A(s,z;\pi)|\ll |z(s+3)^8|.
\end{equation}
    Moreover, integration by parts implies, mindful of the compact support of $w$ in $(0,\infty)$, that for any integer $A \geq 0$,
\begin{align*}
 \widehat w(s)  \ll  \frac{1}{(1+|s|)^{A}}.
\end{align*}
We apply the above estimation by setting $A=20$ together with \eqref{Bound in vertical strips} to get that the integral on the new line can be absorbed into the $O$-term in \eqref{Asymptotic for ratios of all characters}.  This completes the proof of Theorem \ref{Theorem for all characters}.

\vspace*{.5cm}

\noindent{\bf Acknowledgments.}   P. G. is supported in part by NSFC grant 11871082 and L. Z. by the FRG grant PS43707 at the University of New South Wales.

\bibliography{biblio}
\bibliographystyle{amsxport}

\end{document}